\theoremstyle{plain}
\title{A combinatorial proof of the homology cobordism classification of lens spaces}
\author{Margaret Doig}
\address{M. Doig; Syracuse University; Mathematics Department; 215 Carnegie; Syracuse, NY 13244}
\email{midoig@syr.edu}
\author{Stephan Wehrli}
\thanks{S. Wehrli was partially supported by NSF grant DMS-1111680.}
\address{S. Wehrli; Syracuse University; Mathematics Department; 215 Carnegie; Syracuse, NY 13244}
\email{smwehrli@syr.edu}
\theoremstyle{plain}
\newtheorem{theorem}{Theorem}
\newtheorem{corollary}[theorem]{Corollary}
\newtheorem{lemma}[theorem]{Lemma}
\newtheorem{proposition}[theorem]{Proposition}
\newcommand{\Z}{\ensuremath{\mathbb{Z}}}
\newcommand{\Q}{\ensuremath{\mathbb{Q}}}
\begin{document}
\maketitle

\begin{abstract}
It follows implicitly from recent work in Heegaard Floer theory that lens spaces are homology cobordant exactly when they are oriented homeomorphic. We provide a new combinatorial proof using the Heegaard Floer d-invariants, which themselves may be defined combinatorially for lens spaces. 
\end{abstract}

\section*{Introduction}

An integer homology cobordism between two closed, oriented 3-manifolds $Y_1$ and $Y_2$ is a compact, oriented 4-manifold $W$ whose boundary is $\partial W=Y_1\cup -Y_2$ such that the inclusion maps induce isomorphisms $H_i(Y_1;\Z)\cong H_i(W;\Z)\cong H_i(Y_2;\Z)$ for all homology groups; homology cobordism gives an equivalence relation. There are also corresponding definitions of rational homology cobordisms and spin-c rational homology cobordisms.

The homology cobordism classification of the lens spaces was only recently completed. In 1983, Gilmer and Livingston demonstrated that the lens spaces $L(p,q)$ for prime $p$ are homology cobordant iff they are diffeomorphic~\cite{gilmerlivingston}. Fintushel and Stern extended this result in 1988 for odd $p$~\cite{fintushelsternhomologycobordism}. Nicolaescu proved in 2001 that the Ozsv\'ath-Szab\'o d-invariant recovers Reidemeister-Franz torsion~\cite[Section~5]{nicolaescuqhs}, which, in turn, recovers homeomorphism type for lens spaces by results of Brody and Reidemeister~\cite{brody,reidemeister} (technically, Nicolaescu showed the Ozsv\'ath-Szab\'o theta divisor recovers the sum of the Casson-Walker invariant and Reidemeister-Franz torsion, but the Casson-Walker invariant of a lens space is the sum of its d-invariants, by a result of Rasmussen~\cite[Lemma~2.2]{rasmussengodateragaito}, and the theta divisor is the precursor of the d-invariant~\cite{ozsztheta}). In 2011, Greene showed that 2-bridge links are mutants iff their branched double covers (recall, all lens spaces are branched double covers of 2-bridge links) are homeomorphic iff the covers' $\widehat{HF}$ are the same~\cite{greeneconwaymutants}, but $\widehat{HF}$ recovers Reidemeister-Franz torsion by Rustamov~\cite[Theorem~3.4]{rustamov}.

There are many known cobordism invariants, including some from Heegaard Floer homology. Ozsv\'ath and Szab\'o associated the d-invariants to a manifold and spin-c structure which is invariant under spin-c rational homology cobordism, and the d-invariant function on the torsor of spin-c structures is likewise invariant under rational or integral homology cobordism~\cite[Theorem 1.2]{ozszabsolute}. We provide a combinatorial proof that two lens spaces $L(p,q_1)$ and $L(p,q_2)$ share the same d-invariant function precisely when they are oriented homeomorphic. Since the d-invariants are defined combinatorially for lens spaces, this produces a proof of the homology cobordism classification of lens spaces which is entirely combinatorial (modulo the proof that the d-invariants are spin-c homology cobordism invariants; in fact, there is a proof of this invariance for lens spaces which is combinatorial except for its use of Donaldson's Theorem~\cite{greeneconwaymutants}).

\begin{theorem}\label{thm:d-invariants}
Two lens spaces are cobordant by an integral homology cobordism exactly when they are oriented homeomorphic.
\end{theorem}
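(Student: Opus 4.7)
The easy direction is immediate: if $L(p,q_1)$ and $L(p,q_2)$ are oriented homeomorphic then the product cobordism $L(p,q_1)\times[0,1]$ is an integral homology cobordism. All of the content lies in the converse. Suppose $W$ is an integral homology cobordism from $L(p_1,q_1)$ to $L(p_2,q_2)$. Since the inclusions induce isomorphisms on $H_1$, we have $p_1=p_2=:p$, and the restrictions of spin-c structures on $W$ identify the spin-c torsors of the two boundary components as affine $\Z/p$-sets. By the cited invariance theorem of Ozsv\'ath-Szab\'o, the d-invariant functions must agree under this identification. The theorem thus reduces to the purely combinatorial claim that the d-invariant function of $L(p,q)$, viewed as a function on its spin-c torsor, determines $q$ modulo $p$ up to the involution $q\mapsto q^{-1}$.

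To prove this combinatorial claim, I would exploit the classical recursive formula
$$d\bigl(L(p,q),i\bigr) \;=\; \frac{(2i+1-p-q)^2}{4pq}-\frac{1}{4}\;-\;d\bigl(L(q,r),j\bigr),$$
where $r\equiv p\pmod q$ and $j$ is a specific re-indexing of $i$, a recursion that mirrors the Euclidean algorithm applied to $(p,q)$. In principle the entire continued fraction expansion of $p/q$, and hence $q\bmod p$, can be read off from the d-invariant function by inductively peeling off one step of the recursion. The plan is to induct on the length of the Euclidean algorithm: identify intrinsic features of the d-invariant function (such as distinguished extremal values, differences between consecutive values, or coefficients of an associated generating polynomial) that pin down $q\bmod p$ up to the ambiguity $q\mapsto q^{-1}$, then subtract off the explicit quadratic term and reduce to the d-invariant function of a lens space with smaller first parameter.

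The main obstacle is the interplay of two ambiguities in this reconstruction. First, before $q$ is known the quadratic term in the recursion cannot simply be subtracted off, so $q$ must be extracted intrinsically from the shape of the d-invariant function; this is where the $q\mapsto q^{-1}$ ambiguity enters, corresponding exactly to the two possible oriented homeomorphism types collapsing onto the same d-invariant data. Second, the affine reparametrization freedom in identifying the spin-c torsors means the recursion must be carried out without a canonical base point, and one must verify that this translational freedom contributes no further ambiguity beyond $q\mapsto q^{-1}$. Tracking both simultaneously through the induction, while retaining enough arithmetic information at each stage to reconstruct the next continued fraction coefficient, will be the technical heart of the argument.
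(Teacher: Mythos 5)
Your reduction is correct and matches the paper's: the easy direction via the product cobordism, the identification $p_1=p_2$ from $H_1$, the torsor identification of spin-c structures (the paper's Proposition~\ref{lem:torsoriso}), and the resulting purely combinatorial claim that the d-invariant function on the spin-c torsor determines $q$ up to $q\mapsto q^{-1}$. But everything after that is a plan, not a proof, and the plan has an unresolved circularity that you yourself flag without breaking: to ``peel off one step of the recursion'' you must subtract the quadratic term $\tfrac{(2i+1-p-q)^2}{4pq}$, which requires already knowing $q$ (and a base point for $i$), yet $q$ is exactly what you are trying to extract. You say $q$ ``must be extracted intrinsically from the shape of the d-invariant function'' and that tracking the two ambiguities ``will be the technical heart of the argument,'' but that heart is never supplied. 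As written, the proposal does not establish the key claim.

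For comparison, the paper avoids the Euclidean-algorithm induction entirely. It works with the one-step shift formula $d(L(p,q),i+q)-d(L(p,q),i)=\tfrac{p-1-2[i]_p}{p}$ rather than the full recursion, normalizes at a spin structure to define $f(s,n)=p\,d(L(p,q),s+nq)-p\,d(L(p,q),s)$, and observes that $f(s,n)\equiv_p -n^2q$. The torsor identification is multiplication by a unit $u$, so matching the two d-invariant functions forces $q_2\equiv_p u^2q_1$ \emph{and} forces a single function $g$ to satisfy two different step relations (with steps $q$ and $uq$). Comparing $g(m+uq+q)$ computed in two orders yields an identity among residues $[\,\cdot\,]_p$, which by an elementary counting lemma (Lemmas~\ref{lem:counting} and \ref{lem:countingalot}) can hold for all $m$ only if $u\equiv_p\pm1$ or $uq\equiv_p\pm1$, i.e.\ $q_2=q_1$ or $q_1q_2\equiv_p 1$. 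If you want to salvage your approach, this is the kind of intrinsic, basepoint-free constraint you would need to manufacture at each stage of your induction; absent it, the proposal has a genuine gap.
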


We begin with a review of facts about d-invariants and spin-c structures and their behavior under homology cobordism. We also define a type of relative d-invariant $f(s,n)$ which carries all the information we need about the d-invariants. Next, we show that, if $Spin^c(L(p,q_1))$ and $d(L(p,q_1),\cdot)$ are isomorphic to $Spin^c(L(p,q_2))$ and $d(L(p,q_2),\cdot)$ in the category of torsors and functions, then $q_1 = q_2$ or $q_1q_2\equiv 1\pmod p$. Finally, we derive a more explicit description of the d-invariants modulo $\Z$ in the special case where $p$ is prime.

\section*{Notation}
Throughout this paper, let $[a]_p$ denote a representative of the class in the interval $[0,p)$. Let $a\equiv_p b$ mean $a$ and $b$ are equivalent modulo $p$. Let $a^\prime$ denote the inverse of $a$ (if it exists), so $aa^\prime\equiv_p 1$.

\section*{Acknowledgements}
Thank you to the many who influenced this paper, including Peter Horn for instigating our investigation of homology cobordism, Adam Levine for initially drawing our attention to Equation~\eqref{eqn:dinvshift}, and Josh Greene for a helpful discussion of the nature of $Spin^c(Y)$ as a torsor.

\section*{d-invariants and spin-c structures}

Heegaard Floer homology assigns several flavors of invariants (including $HF^\infty$ and $HF^+$) to a closed, connected, oriented 3-manifold and a choice of spin-c structure using a Heegaard decomposition of the manifold~\cite{ozsz1,ozsz2}. The generators come with a relative $\Z$-grading. A spin-c cobordism $(W, \mathfrak{s})$ from $(Y_1,\mathfrak{s}|_{Y_1})$ to $(Y_2,\mathfrak{s}|_{Y_2})$ produces a map 
\[
F^+_{W,\mathfrak{s}}: CF^+(Y_1, \mathfrak{s}|_{Y_1}) \longrightarrow CF^+(-Y_2, \mathfrak{s}|_{-Y_2})
\]
which induces a relative grading between generators for the two manifolds:
\begin{equation}\label{eqn:gradingshift}
gr(F^+_{W,\mathfrak{s}}(x)) - gr (x) = \frac{c_1(\mathfrak{s})^2 -2\chi(W)-3sign(W)}{4}.
\end{equation}
For an appropriate choice of spin-c manifold, including a rational homology sphere with its unique spin-c structure, this grading shift allows a lift of the relative $\Z$-grading to an absolute $\Q$-grading by fixing a canonical grading for $S^3$ with its unique spin-c structure. 

Derived from this absolute grading is the \emph{correction term} or \emph{d-invariant} $d(Y,\sigma)$, the minimal grading of any non-torsion element in $HF^+(Y,\sigma)$ inherited from $HF^\infty(Y,\sigma)$~\cite{ozszabsolute}. It is invariant under spin-c rational homology cobordism (if $W$ is a rational homology cobordism, then the right side of Equation~\eqref{eqn:gradingshift} is 0 for both $W$ and $-W$). The d-invariants, as a function on a torsor over $H^2(Y)\cong H_1(Y)$, is also invariant under integral homology cobordism in the following fashion:

\begin{proposition}\label{lem:torsoriso}
If $Y_1$ and $Y_2$ are integrally homology cobordant, then $Spin^c(Y_1)$ and $d(Y_1,\cdot)$ are isomorphic to $Spin^c(Y_2)$ and $d(Y_2,\cdot)$ in the category of torsors and functions. 
\end{proposition}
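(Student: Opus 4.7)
The plan is to use the cobordism itself to build the torsor isomorphism, and to extract the equality of d-invariants directly from the grading-shift formula~\eqref{eqn:gradingshift}. Fix an integral homology cobordism $W$ from $Y_1$ to $Y_2$.

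First, I would extract the cohomological consequences of the hypothesis. The long exact sequences of the pairs $(W,Y_j)$ combined with the isomorphisms $H_\ast(Y_j;\Z)\cong H_\ast(W;\Z)$ force $H_\ast(W,Y_j;\Z)=0$, and by Poincar\'e-Lefschetz duality and universal coefficients the restriction maps $H^2(W;\Z)\to H^2(Y_j;\Z)$ are isomorphisms for $j=1,2$. As consequences $\chi(W)=0$, $\mathrm{sign}(W)=0$, and $H^2(W;\Q)=0$, so for any spin-c structure $\mathfrak{s}$ on $W$ we have $c_1(\mathfrak{s})^2=0$ rationally. Hence the grading shift on the right-hand side of~\eqref{eqn:gradingshift} vanishes, both for $W$ and for its reverse cobordism $-W$.

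Next, I would construct the torsor isomorphism. Every orientable smooth $4$-manifold admits a spin-c structure, so $Spin^c(W)$ is a nonempty torsor over $H^2(W;\Z)$. Restriction to each boundary component gives an $H^2$-equivariant map $r_j:Spin^c(W)\to Spin^c(Y_j)$, and by Step one the underlying group isomorphisms make each $r_j$ a bijection of torsors. Setting $\phi:=r_2\circ r_1^{-1}$ then yields a torsor isomorphism $Spin^c(Y_1)\to Spin^c(Y_2)$, equivariant with respect to the composite isomorphism $H^2(Y_1;\Z)\cong H^2(W;\Z)\cong H^2(Y_2;\Z)$.

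Finally, I would match the d-invariants pointwise. Given $\sigma\in Spin^c(Y_1)$, let $\mathfrak{s}\in Spin^c(W)$ be the unique lift with $r_1(\mathfrak{s})=\sigma$; by construction $r_2(\mathfrak{s})=\phi(\sigma)$. Because the right-hand side of~\eqref{eqn:gradingshift} vanishes for both $W$ and $-W$, the induced cobordism maps on $HF^+$ preserve absolute $\Q$-gradings in both directions, and the standard invariance argument of Ozsv\'ath-Szab\'o~\cite[Theorem~1.2]{ozszabsolute} then forces the minimal non-torsion gradings to agree, i.e.\ $d(Y_1,\sigma)=d(Y_2,\phi(\sigma))$.

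The main obstacle I anticipate is not the strategic outline but the bookkeeping at the outgoing boundary: one must track the orientation reversal on $-Y_2$ and the associated spin-c conjugation carefully so that $\phi(\sigma)$ is identified with the spin-c structure whose d-invariant genuinely appears in the codomain of $F^+_{W,\mathfrak{s}}$, rather than with its conjugate. Once the boundary conventions are pinned down, both the torsor claim and the d-invariant equality reduce to the already-established spin-c rational homology cobordism invariance of the individual $d$'s.
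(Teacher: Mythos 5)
Your proof is correct and follows essentially the same route as the paper: both arguments build the torsor isomorphism as $r_2\circ r_1^{-1}$ using the restriction maps from $Spin^c(W)$ to the two boundary components (you phrase the underlying isomorphism via $H^2(W)\to H^2(Y_j)$ and Lefschetz duality, the paper via the split long exact sequence of $(W,\partial W)$, which is the dual bookkeeping), and both then conclude $d(Y_1,\sigma)=d(Y_2,\phi(\sigma))$ from the vanishing of the grading shift in~\eqref{eqn:gradingshift}, i.e.\ from spin-c rational homology cobordism invariance of $d$. The orientation/conjugation caveat you raise is the same point the paper addresses by noting that the restriction maps respect the conjugation action.
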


\begin{proof} 
Let $W$ be the 4-manifold cobordism with $\partial W = Y_1 \cup -Y_2$.

$Spin^c(Y_i)$ is a torsor over $H^2(Y_i)\cong H_1(Y_i)$, and $Spin^c(W)$ s a torsor over $H^2(W)\cong H_2(W,\partial W)$. The long exact sequence for the pair $(W, \partial W)$ splits:
\[\begin{CD}
0 @>>> H_2(W,\partial W) @>r_1^*-r_2^*>> H_1(Y_1)\oplus H_1(-Y_2) @>>> H_1(W) @>>> 0.
\end{CD}\]
where $r_i$ is the restriction map to ${L(p,q_i)}$. This short sequence induces isomorphisms
\[\begin{CD}
H_1(Y_1) @<r_1^*<< H_2(W,\partial W) @>r_2^*>> H_1(Y_2)
\end{CD}\]
 which in turn induce the required torsor isomorphism
\[\begin{CD}
Spin^c(Y_1) @>r_2 r_1^{-1}>> Spin^c(Y_2).
\end{CD}\]
There is a $\Z/2Z$ conjugation action $\mathfrak{t}\mapsto\overline{\mathfrak{t}}$ on the spin-c structures which fixes the spin structures. The restrictions maps and so also this isomorphism respect it.

Because it is invariant under spin-c homology cobordism, $d(Y_1,r_1(t)) = d(Y_2,r_2(t))$, and the functions $d(Y_i,\cdot)$ are isomorphic. 
\end{proof}

The lens space $-L(p,q)$ has a pointed Heegaard diagram $(T^2, \alpha, \beta, z)$ with a single $\alpha$ curve and $\beta$ curve and exactly $p$ intersection points $\alpha \cap \beta$, one in each of the $p$ spin-c structures. For example, the Heegaard decomposition of $-L(5,2)$ looks like:
\begin{center}  
\includegraphics[scale=0.25]{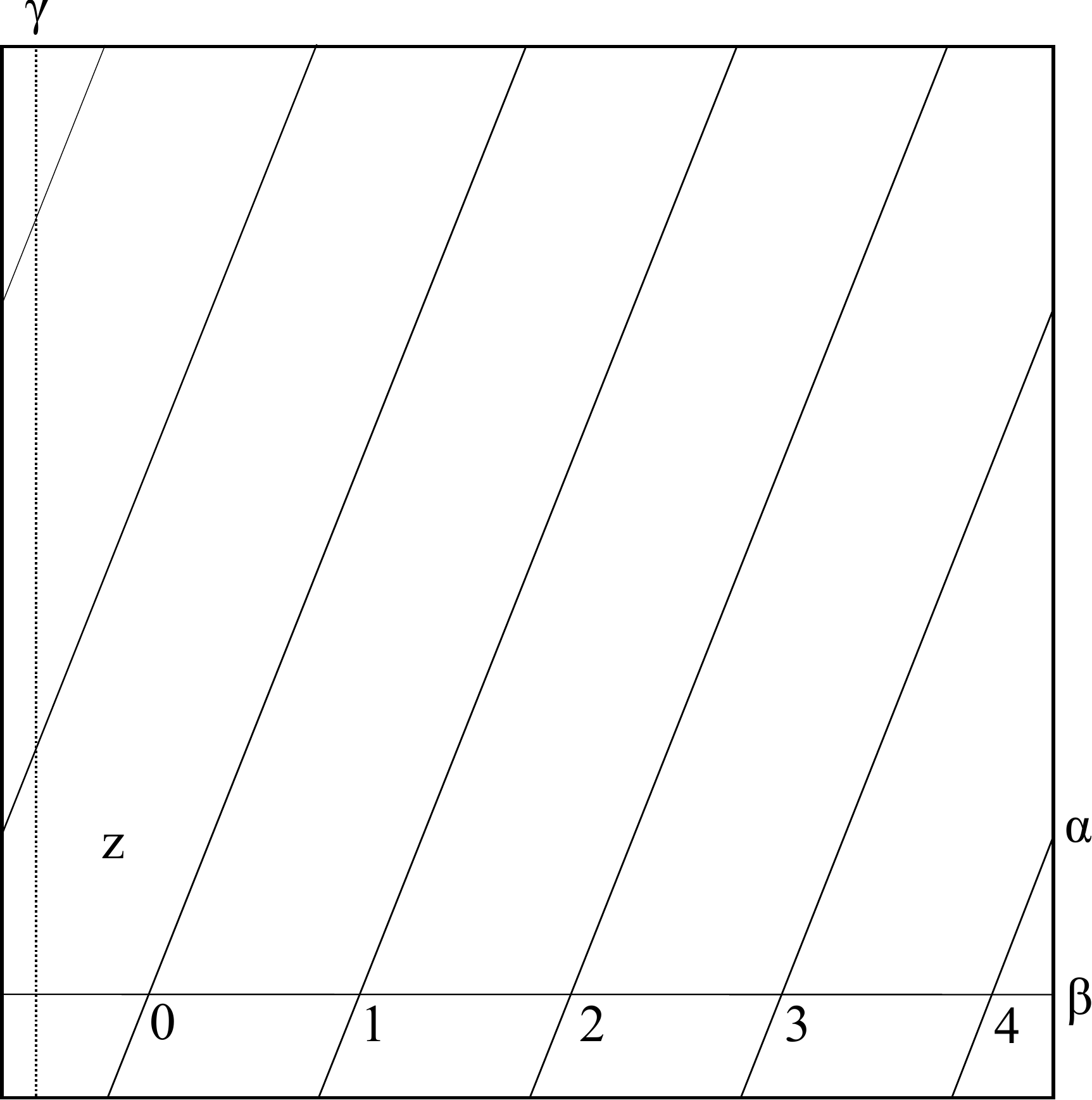}
\end{center}
We have chosen the orientation on $L(p,q)$ so that the manifold is $-p/q$ surgery on the unknot. Choose an identification of $Spin^c(L(p,q))$ by labelling the intersection points $0, 1, \dots, p-1$ from left to right across the bottom of the diagram, beginning with the $0$ for the bottom right corner of the domain containing the basepoint $z$~\cite[Proposition~4.8]{ozszabsolute}. To see the difference of two spin-c structures $i-j\in H_1(L(p,q))$ under this identification, observe the curve $\gamma$, which is a generator of $H_1(L(p,q))$ and connects $i$ to $i+q$ along the $\alpha$ curve and $i+q$ to $i$ along the $\beta$ curve, so we say $(i+q)-i=[\gamma]$. Any other $i-j$ gives a multiple of $[\gamma]$.

There is a combinatorial description of the d-invariants of a lens space based on the grading shift in Equation~\eqref{eqn:gradingshift} derived in~\cite[Proposition~4.8]{ozszabsolute}. Assuming $0<q<p$,
\[
d(L(p,q),i) = \frac{1}{4}- \frac{(2[i]_p+1-p-q)^2}{4pq} - d(L(q,p),i).
\]
Derived from this recursive formula is a more direct formula for how the d-invariants change under the $\gamma$-action~\cite[Corollary~5.2]{leelipshitz}:
\begin{equation}\label{eqn:dinvshift}
d(L(p,q),i+q)-d(L(p,q),i)=\frac{p-1-2[i]_p}{p}.
\end{equation}
The spin structures are exactly the integers among the following: 
 \begin{equation}\label{eqn:spin}
 \frac{q-1}{2} \quad \mathrm{and} \quad \frac{p+q-1}{2}.
 \end{equation}
This result may be deduced from Equation~\eqref{eqn:dinvshift}: The conjugation action which fixes a spin structure $s$ must identify $s+n$ and $s-n$, and $d(L(p,q),i+q)=d(L(p,q),i-q)$ implies $\frac{p-1-2i}{p}=-\frac{p-1-2(i-q)}{p}$, or $2i\equiv_p q-1$. For alternative explanations, Cf~\cite[p. 134]{ueozsz} or \cite[Lemma 6.1]{cochranhorn}. Note that both numbers in \eqref{eqn:spin} are axes of symmetry, and both are integers when $p$ is even, but only one is an integer (and so a spin structure) when $p$ is odd.

In the case of two homology cobordant lens spaces $L(p,q_i)$, Proposition~\ref{lem:torsoriso} also implies:
\[
d(L(p,q_1), s_1+a) = d(L(p,q_2), r_2 r_1^{-1}(s_1) + (r_2 r_1^{-1})^*(a)) = d( s_2+ua).
\]
for any $a\in H_1(L(p,q_1))$, where $s_1$ and $s_2$ are chosen spin structures which are restrictions of a common spin structure on $W$. The last equality follows because $(r_2 r_1^{-1})^*$ is an isomorphism of $\Z/p\Z$, which means it is multiplication by some unit $u\in \Z/p\Z$.

\section*{Relative d-invariants}
Let $p$ and $q$ be coprime with $p>q>0$. Choose a spin structure $s$ as in \eqref{eqn:spin} (if $p$ is odd, this choice is forced). We renormalize the d-invariants of $L(p,q)$ by defining a function $f(s,\cdot)\colon\Z/p\Z\rightarrow\Z$ using this choice of spin structure:
\[
f(s,n):=pd(L(p,q),s+nq)-pd(L(p,q),s).
\]

\begin{lemma}\label{lem:propertiesoff}
The function $f$ obeys
\[\begin{split}
f(s,0) &=0\\
f(s,n+1) &= f(s,n)+p-1-2\left[s+nq\right]_p\\ 
f(s,n) &\equiv_p-n^2q 
\end{split}\]
If $L(p,q_1)$ and $L(p,q_2)$ are homology cobordant by $W$, and if $f_1$ and $f_2$ are the corresponding functions for some compatible choice of spin structure $s_1$ and $s_2$ which restrict the same spin structure on $W$,
\[\begin{split}
f_2(s_2,n) &= f_{1}(s_1,nu)\\
q_2 &\equiv_p u^2 q_1
\end{split}\]
for some unit $u\in \Z/p\Z$.
\end{lemma}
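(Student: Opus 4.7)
The first three identities follow quickly from the definition of $f$ together with Equation~\eqref{eqn:dinvshift}. The equality $f(s,0)=0$ is immediate, and substituting $i=s+nq$ into Equation~\eqref{eqn:dinvshift} and multiplying by $p$ yields the recursion. Iterating this recursion and reducing modulo $p$ (using $[x]_p \equiv_p x$) gives
\[
f(s,n) \equiv_p -n - 2\sum_{k=0}^{n-1}(s+kq) = -qn^2 + n(q-1-2s).
\]
By the derivation immediately following Equation~\eqref{eqn:spin}, any spin structure satisfies $2s \equiv_p q-1$, so the last term vanishes and $f(s,n) \equiv_p -qn^2$.

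For the homology cobordism statement, my plan is to apply the identity $d(L(p,q_1),s_1+a)=d(L(p,q_2),s_2+ua)$ established at the end of the preceding section. Setting $a=u^{-1}nq_2$ gives
\[
f_2(s_2,n)=p\,d(L(p,q_1),s_1+u^{-1}nq_2)-p\,d(L(p,q_1),s_1).
\]
Since $q_1$ is a unit mod $p$, I can rewrite $u^{-1}nq_2 \equiv_p m\,q_1$ with $m = u^{-1}nq_2q_1^{-1}$, and the right-hand side becomes $f_1(s_1,m)$. Applying the mod-$p$ identity just proved to both sides of $f_2(s_2,n)=f_1(s_1,m)$ yields $-n^2q_2 \equiv_p -m^2q_1$; taking $n=1$ and substituting for $m$ gives $q_2 \equiv_p u^2q_1$, and then $m = u^{-1}n(u^2q_1)q_1^{-1} = nu$, as claimed.

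The main subtlety is simply bookkeeping: although both $H_1(L(p,q_1))$ and $H_1(L(p,q_2))$ are identified with $\Z/p\Z$ via the Heegaard labels, the generator $[\gamma_i]$ implicit in the definition of $f_i$ corresponds to the integer $q_i$, and $q_1\neq q_2$ in general. The torsor isomorphism $u$ carries $q_1$ to $uq_1$ rather than to $q_2$, so reconciling the two sides requires a multiplicative adjustment by $q_2q_1^{-1}$. Once this is handled, the mod-$p$ congruence $f(s,n)\equiv_p -n^2q$ does the remaining heavy lifting and forces the quadratic relation $q_2 \equiv_p u^2q_1$.
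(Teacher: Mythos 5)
Your proof is correct and follows essentially the same route as the paper's: the first three identities come from Equation~\eqref{eqn:dinvshift} together with the spin condition $2s\equiv_p q-1$, and the last two from the cobordism identity $d(L(p,q_1),s_1+a)=d(L(p,q_2),s_2+ua)$ supplied by Proposition~\ref{lem:torsoriso}. The only difference is organizational, and it favors you: the paper asserts $f_2(s_2,n)=f_1(s_1,nu)$ directly from Proposition~\ref{lem:torsoriso} and then deduces $q_2\equiv_p u^2q_1$ from the third identity, whereas you correctly note that with $u$ fixed as the torsor-isomorphism unit the reindexing initially gives $f_2(s_2,n)=f_1(s_1,m)$ with $m=u'nq_2q_1'$, so one must first extract $q_2\equiv_p u^2q_1$ from the mod-$p$ congruence before $m$ collapses to $nu$ --- your bookkeeping makes explicit a step the paper's terse justification elides.
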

A cobordism between two lens spaces tells us about the torsor structure defined above.

\begin{proof}
The first two equalities follow from  Equation~\eqref{eqn:dinvshift} and the definitions of $f$ and $s$. The third equality holds because $f(s,n+1)\equiv_p f(s,n) -(2n+1)q$ and $f(s,0)\equiv_p 0$. The fourth 
follows from Proposition~\ref{lem:torsoriso}, assuming that the spin structures were chosen so that $r_2r_1^{-1}(s_1) = s_2$, and the last equality follows from the third and fourth.
\end{proof}

\section*{Proof of Theorem~\ref{thm:d-invariants}}
We will now prove the main theorem. 

\begin{proof}[Proof of Theorem~\ref{thm:d-invariants}] By Lemma~\ref{lem:propertiesoff}, there is a unit $u$ such that 
\[
q_1= q\quad\mbox{and}\quad q_2\equiv_p u^2q.
\] 
There is also some choice of spin structures $s_1$ and $s_2$ which are restrictions of the same spin structure on $W$. Define $g\colon\Z/p\Z\longrightarrow\Z$ such that
\[
g(m):=f_1(s_1,m q^\prime)=f_2(s_2,mu^\prime q^\prime),
\]
which is well-defined by the definitions of $f_i(s_i,m)$ and Lemma~\ref{lem:propertiesoff}.

Apply the recursive equations for $f_i(s_i,n)$ from Lemma~\ref{lem:propertiesoff} to $f_1(s_1,m q^\prime + 1)$ and $f_2(s_2,mu^\prime q^\prime+1)$ to see that $g$ satisfies the relations:
\[
g(m+q)=g(m)+p-1-2\left[s_1+m\right]_p
\]
and
\[
g(m+uq)=g(m)+p-1-2\left[s_2+mu\right]_p.
\]

Since the above relations must hold for all $m$, we can compute $g(m+uq+q)$ in two ways, as $g((m+uq)+u)$ or as $g((m+u)+uq)$. Since the results must be the same, we get
\begin{equation}\label{eqn:key_identity}
\left[s_1+m\right]_p+\left[s_2+(m+q)u\right]_p
=\left[s_2+mu\right]_p+\left[s_1+m+uq\right]_p.
\end{equation}
Now recall that
\[
\left[X+Y\right]_p
=\begin{cases}
[X]_p+[Y]_p  & \mbox{if $[X]_p<p-[Y]_p$},\\
[X]_p+[Y]_p-p & \mbox{if $[X]_p\geq p-[Y]_p$.} 
\end{cases}
\] 
Equation~\eqref{eqn:key_identity} is therefore equivalent to the condition that
\begin{equation}\label{eqn:key_equivalence}
\left[s_1+m\right]_p<p-\left[uq\right]_p \iff \left[s_2+mu\right]_p<p-\left[uq\right]_p
\end{equation}
for all $m\in\Z/p\Z$.

By Lemma~\ref{lem:countingalot} below, Condition \eqref{eqn:key_equivalence} can only be satisfied for all $m\in\Z/p\Z$ if either $u\equiv_p \pm 1$ or $uq\equiv_p \pm 1$. That is, either $q_2=q_1$ or $q_1q_2\equiv_p 1$.
\end{proof}

Note that we did not use any information in the above proof about the explicit forms the $s_i$ take, merely the fact that there exist $s_1$ and $s_2$ which are restrictions of some spin structure on $W$; in particular, the parity of $p$ is irrelevant.

We now address two technical lemmata required for the proof above. 

\begin{lemma}\label{lem:counting}
Let
\[
H: \{0, 1, \cdots, p-1\} \longrightarrow \{0, 1, \cdots, p-1\}
\]
be a function such that $H(i) \equiv_p H(0)+in.$ If
\[
H(i) < C \iff i < C
\]
where $2 \leq C \leq p-2$, then
\[
H(i) = i \quad \mathrm{or} \quad H(i) = C-1-i.
\]
\end{lemma}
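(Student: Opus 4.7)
My plan is to prove this in three steps: first show $\gcd(n,p)=1$, then use a range argument on the smaller of the two intervals $[0,C-1]$ and $[C,p-1]$ to force $H$ to be strictly monotonic there, and finally conclude that $n\equiv\pm 1\pmod{p}$ with the correct shift to obtain the two stated possibilities.

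For the coprimality, suppose for contradiction that $d:=\gcd(n,p)>1$. Then $H(i+p/d)\equiv H(i)\pmod{p}$, so $H$ is periodic of period $p/d$. Setting $i_0=\max(0,C-p/d)$ and $j_0=i_0+p/d$, one checks $0\leq i_0<C\leq j_0\leq p-1$, while periodicity gives $H(i_0)=H(j_0)$; the hypothesis then forces $H(i_0)<C$ and $H(j_0)\geq C$ simultaneously, a contradiction. Hence $\gcd(n,p)=1$, $H$ is a bijection of $\{0,\ldots,p-1\}$, and the hypothesis restricts $H$ to bijections on each of $[0,C-1]$ and $[C,p-1]$.

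Next, let $c=\min(C,p-C)\leq p/2$, and let $S$ be whichever of the two intervals has this size. If $S=[C,p-1]$, pass to the shifted map $\tilde H(j):=H(C+j)-C$ on $[0,c-1]$, which is again a bijection satisfying $\tilde H(j)\equiv \tilde H(0)+nj\pmod{p}$ with the same linear part $n$. For consecutive $j,j+1\in[0,c-2]$, the difference $\tilde H(j+1)-\tilde H(j)$ is an integer in $[-(c-1),c-1]$ congruent to $n\pmod{p}$; since this interval has length $2(c-1)<p$, it contains at most one representative of the class of $n$, so all such differences equal a single integer $\Delta$. Thus $\tilde H$ is strictly monotonic on $[0,c-1]$, hence is either the identity (giving $\Delta=+1$, so $n\equiv 1\pmod{p}$ and $\tilde H(0)=0$) or the reversal $j\mapsto c-1-j$ (giving $\Delta=-1$, so $n\equiv -1\pmod{p}$ and $\tilde H(0)=c-1$). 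Undoing the shift and using the affine formula $H(i)\equiv H(0)+in\pmod{p}$ together with $H(0)\in\{0,\ldots,p-1\}$ yields $H(i)=i$ for all $i$ in the first case and $H(i)=C-1-i\pmod{p}$ in the second, exactly as claimed.

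The main obstacle is the case $C>p/2$, where a direct monotonicity argument on $[0,C-1]$ would collapse because both $n$ and $n-p$ could a priori fit as consecutive differences in $[-(C-1),C-1]$. Working on the smaller interval avoids this, and the hypothesis $C\leq p-2$ is precisely what ensures the complement $[C,p-1]$ also has at least two elements, so that the shift-based reduction to the smaller interval is always available.
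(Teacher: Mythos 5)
Your proof is correct, and while it shares the paper's overall architecture---reduce to whichever of $[0,C-1]$ and $[C,p-1]$ has at most $p/2$ elements via the shift $\widehat{H}(i)=H(i+C)-C$, then pin down the linear coefficient---the central step is carried out by a genuinely different mechanism. The paper normalizes $n$ to $[-p/2,p/2]$, reads off the two conclusions when $n=\pm1$, and for $|n|\geq 2$ exhibits an explicit index $i_0<C$ (via a floor-function formula, with separate cases for the sign of $n$) at which $H(i_0)\geq C$, contradicting the hypothesis. You instead first rule out $\gcd(n,p)>1$ by a periodicity argument, deduce that $H$ restricts to a bijection of the small interval, and then observe that all consecutive differences lie in $[-(c-1),c-1]$ and are congruent to $n$ modulo $p$, hence equal a single integer $\Delta$; bijectivity of the resulting arithmetic progression onto $\{0,\dots,c-1\}$ forces $\Delta=\pm1$ and the correct initial value. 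Your route trades the explicit witness construction and its sign case-split for an extra coprimality step (which you do need, since bijectivity of $H$ on the small interval underlies the monotonicity conclusion); it also cleanly covers $n\equiv_p 0$, which technically falls outside the paper's trichotomy $n=\pm1$, $n\geq2$, $n\leq-2$ after normalization (harmlessly so, as that case is trivial). Both arguments use the hypothesis $2\leq C\leq p-2$ in the same essential way, namely to guarantee that each of the two intervals contains at least two points, and both interpret the conclusion $H(i)=C-1-i$ as reduction into $\{0,\dots,p-1\}$.
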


A few experiments will quickly convince the reader that this lemma should be true. For the sake of completeness, we prove:

\begin{proof} Choose $-p/2\leq n \leq p/2$. Assume, for the moment, that $C\leq p/2$. 

If $n=1$, then $H(0)=0$ and $H(i) = i$. 

If $n=-1$, then $H(0) = C-1$ and $H(i) = C-i-1$.

For any other $n$, there will eventually be an $i<C$ with $H(i)\geq C$. For example, for $n \geq 2$, take 
\[
i_0 = \left\lfloor\frac{C-H(0)}{n}\right\rfloor+1
\]
Note that $0 < i_0 <C$ since $C\geq 2$, and
\[
0<H(0) + ni_0 \leq H(0) + n\left(\frac{C-H(0)}{n}+1\right)=C+n\leq p
\]
so we may remove the $\equiv_p$ in the definition of $H(i_0)$:
\[
H(i_0) = H(0) + ni_0\geq H(0)+n\left(\frac{C-H(0)}{n}\right) = C,
\]
as desired.

Similarly, for $n\leq-2$, take 
\[i_0 =  \left\lfloor\frac{H(0)}{|n|}\right\rfloor+1.\]
Now $-p<H(0) + ni_0< 0$, so 
\[
H(i_0) = H(0) + ni_0 + p \geq H(0) - |n| \left(\frac{H(0)}{|n|}+1\right)+p\geq p - |n| \geq p/2\geq C
\]

It is easy to adjust the above proof to accommodate $C \geq p/2$. The key is that some (at least two and at most $p-2$) adjacent values of $i$ map to (the same number of) adjacent values of $H(i)$. The following are equivalent:
\[\begin{split}
H(i) < C &\iff i<C\\
0 \leq H(i)\leq C-1 &\iff 0\leq i \leq C-1 \\
p-C \leq H(i)+p-C\leq p-1&\iff p-C\leq i+p-C \leq p-1 \\
p-C \leq H(i-p+C)+p-C\leq p-1&\iff p-C\leq i \leq p-1 \\
H(i+C)-C<p-C &\iff i < p-C
\end{split}\]
and $\widehat{H}(i)=H(i+C)-C$ also obeys the rule $\widehat{H}(i) \equiv_p \widehat{H}(0)+in.$
\end{proof}

\begin{lemma}\label{lem:countingalot}
Let
\[\begin{split}
f(m) &= [x+my]_p\\
F(m) &= [X+mY]_p.
\end{split}\]
with $y$ and $Y$ units modulo $p$. If
\[
f(m) < C \iff F(m) < C
\]
for some $2 \leq C \leq p-2$, then
\[
Y=\pm y.
\]
\end{lemma}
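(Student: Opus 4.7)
The plan is to reduce directly to Lemma~\ref{lem:counting} via the change of variable $i := f(m)$. Since $y$ is a unit modulo $p$, the map $m\mapsto f(m) = [x+my]_p$ is a bijection from $\Z/p\Z$ onto $\{0,1,\dots,p-1\}$, with inverse given by $i\mapsto (i-x)y' \Mod{p}$. Substituting this inverse into $F$, I would define
\[
H(i) := F\bigl((i-x)y'\bigr) = \left[X + (i-x)y'Y\right]_p,
\]
and set $n := y'Y$. Then $H(0) = [X - xn]_p$ and $H(i) \equiv_p H(0) + in$, which is exactly the hypothesis on $H$ in Lemma~\ref{lem:counting}.

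Next, I would translate the biconditional. As $m$ ranges over $\Z/p\Z$, the value $i = f(m)$ ranges over all of $\{0,1,\dots,p-1\}$, so the assumption $f(m)<C \iff F(m)<C$ becomes exactly $i<C \iff H(i)<C$ for every $i$. With $2 \leq C \leq p-2$ given, Lemma~\ref{lem:counting} applies and yields either $H(i) = i$ for all $i$ or $H(i) = C-1-i$ for all $i$.

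Finally, I would compare the two possible forms of $H(i)$ with the identity $H(i) \equiv_p H(0) + in$. The first case forces $n \equiv_p 1$ (and $H(0) \equiv_p 0$), which gives $Y \equiv_p y$; the second forces $n \equiv_p -1$ (and $H(0) \equiv_p C-1$), which gives $Y \equiv_p -y$. Either way, $Y = \pm y$ as claimed. I do not expect a real obstacle in this argument; the only subtlety is that the change of variables above is a bijection precisely because $y$ is a unit modulo $p$, and this is already built into the hypothesis.
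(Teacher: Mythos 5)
Your proposal is correct and follows essentially the same route as the paper: precompose with the substitution $m(i)=(i-x)y'$ to reduce to Lemma~\ref{lem:counting}, then read off $y'Y\equiv_p\pm 1$ from the two possible forms of $H$. No substantive difference.
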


\begin{proof}
Rescale $m$ by precomposing $f$ and $F$ with
\[
m(i) = (i-x)y^\prime.
\]
Then
\[
h(i) := f(m(i)) = [i]_p
\]
and 
\[
H(i):= F(m(i)) = [(X-xy'Y)+i(y^\prime Y)]_p.
\]

The lemma statement is equivalent to 
\[
h(i) < C \iff H(i) < C,
\]
which is equivalent to
\[
H(i) < C \iff i <C.
\]
Note that $H(i) \equiv_p H(0)+in$, and apply Lemma~\ref{lem:counting}.

If $H(i)=i$, then $y^\prime Y=1$, or $Y=y$, and $X-xy'Y=0$, or $X=x$.  

If $H(i)=C-1-i$, then $H(0)=C-1\equiv_p X-xy^\prime Y$ and $H(C-1)=0\equiv_p X-xy^\prime Y+(C-1)y^\prime Y \equiv_p C-1+(C-1)y^\prime Y$, so $Y\equiv_p-y$ and $X\equiv_p-x+C-1$.
\end{proof}

\section*{If $p$ is prime}

In the special case where $p$ is a prime, we have a more precise description of the d-invariants modulo \Z. Consider the reduction of $f$ modulo $p$, $\overline{f}(s,\cdot)\colon\Z/p\Z\rightarrow\Z/p\Z$. We denote by
\[
\overline{S}(L(p,q))\subseteq\Z/p\Z
\]
the image of $\overline{f}$.

\begin{theorem}\label{thm:main} Let $p$ be prime number and $q$ coprime to $p$.
\begin{enumerate}
\item[(a)]
If $q$ is a quadratic residue modulo $p$, then
\[
\overline{S}(L(p,q))=\{a\in\Z/p\Z\,|\,\mbox{$-a$ is a square in $\Z/p\Z$}\}.
\]
\item[(b)]
If $q$ is a quadratic non-residue modulo $p$, then
\[
\overline{S}(L(p,q))=\{a\in\Z/p\Z\,|\,\mbox{$-a$ is not a square in $\Z/p\Z$}\}\cup\{0\}.
\]
\end{enumerate}
\end{theorem}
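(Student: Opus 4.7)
The plan is to observe that the whole theorem follows almost immediately from the congruence $f(s,n)\equiv_p -n^2 q$ in Lemma~\ref{lem:propertiesoff}, which identifies $\overline{f}(s,n)$ with the element $-n^2 q\in\mathbb{Z}/p\mathbb{Z}$. Thus
\[
\overline{S}(L(p,q))=\{-n^2 q\bmod p\,|\,n\in\mathbb{Z}/p\mathbb{Z}\},
\]
and the problem reduces to describing the image of the map $n\mapsto -n^2 q$ on $\mathbb{Z}/p\mathbb{Z}$, using only elementary facts about the group of squares in $(\mathbb{Z}/p\mathbb{Z})^*$.

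For part (a), I would write $q=c^2$ for some unit $c$. Then $-n^2q=-(nc)^2$, and because multiplication by $c$ is a bijection on $\mathbb{Z}/p\mathbb{Z}$, the image of $n\mapsto -(nc)^2$ equals $\{-k^2\,|\,k\in\mathbb{Z}/p\mathbb{Z}\}$. This is precisely the set $\{a\in\mathbb{Z}/p\mathbb{Z}\,|\,-a\text{ is a square}\}$ (taking $0=0^2$ to be a square), giving (a).

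For part (b), I would separate the $n=0$ term (contributing $0$) from the $n\neq 0$ terms. For $n\neq 0$, $n^2$ ranges over the nonzero squares, so by the standard coset description of squares vs.\ nonsquares in $(\mathbb{Z}/p\mathbb{Z})^*$ (the nonzero squares form an index-$2$ subgroup; multiplication by a nonsquare $q$ sends this subgroup bijectively onto the nonsquare coset), the product $n^2 q$ ranges over all nonzero quadratic non-residues. Negating, $-n^2 q$ ranges over $\{a\,|\,-a\text{ is a nonzero non-residue}\}$. Together with the contribution $n=0$, this gives exactly $\{0\}\cup\{a\,|\,-a\text{ is not a square}\}$, as claimed.

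There is no real obstacle here: once Lemma~\ref{lem:propertiesoff} is in hand, the argument is a one-step application of the fact that $(\mathbb{Z}/p\mathbb{Z})^*$ modulo squares is a group of order $2$ when $p$ is an odd prime (and the case $p=2$ is trivial since every element is a square). The only thing to be slightly careful about is the convention that $0$ counts as a square in $\mathbb{Z}/p\mathbb{Z}$, which is what makes the $n=0$ contribution fit cleanly into the stated formulas in both cases.
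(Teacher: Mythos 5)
Your proposal is correct and follows essentially the same route as the paper: both reduce the theorem to describing the image of $n\mapsto -n^2q$ via the congruence $f(s,n)\equiv_p -n^2q$ from Lemma~\ref{lem:propertiesoff}, and then invoke the structure of squares modulo $p$. The only cosmetic difference is that the paper phrases the residue computation in terms of multiplicativity of the Legendre symbol, while you use the equivalent index-two subgroup/coset description of $(\Z/p\Z)^*$ modulo squares.
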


In the residue case, a more explicit description of the d-invariants is possible: 
\begin{corollary}\label{cor:main}Let $p$ be an odd prime number and $q$ a residue coprime to $p$.
\begin{enumerate}
\item[(a)]
There is only one $n$ such that $\overline{f}(s,n)=0$, namely, $n=0$.
\item[(b)]
For every $a\in\overline{S}(L(p,q))\setminus\{0\}$, there are exactly two $n$ such that $\overline{f}(s,n)=a$.
\item[(c)]
$\overline{S}(L(p,q))$ contains exactly $(p+1)/2$ elements.
\end{enumerate}
\end{corollary}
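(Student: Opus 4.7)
The plan is to reduce the entire corollary to the explicit congruence $f(s,n)\equiv_p -n^2 q$ established as the third equality in Lemma~\ref{lem:propertiesoff}. This tells us that $\overline{f}(s,\cdot)\colon\Z/p\Z\to\Z/p\Z$ is just the quadratic map $n\mapsto -qn^2$, so each of (a), (b), (c) becomes an elementary statement about fibers and image of this quadratic polynomial over the field $\Z/p\Z$.

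First I would dispatch (a): since $p$ is an odd prime and $q$ is coprime to $p$, the equation $-qn^2\equiv_p 0$ forces $n^2\equiv_p 0$, and hence $n\equiv_p 0$, because $\Z/p\Z$ is a field. For (b), fix $a\in\overline{S}(L(p,q))\setminus\{0\}$ and rewrite $-qn^2\equiv_p a$ as $n^2\equiv_p -aq'$, where $q'$ denotes the inverse of $q$ modulo $p$. The right-hand side is a nonzero element of $\Z/p\Z$, and in a field of odd characteristic a nonzero square has exactly two square roots, namely $\pm n_0$. Since $a$ lies in the image, at least one such $n_0$ exists, so there are exactly two preimages.

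Finally, (c) falls out by bookkeeping: (a) says the fiber of $\overline{f}(s,\cdot)$ over $0$ is $\{0\}$, and (b) says every other fiber over an element of the image has size exactly $2$. Hence
\[
|\overline{S}(L(p,q))|=1+\frac{p-1}{2}=\frac{p+1}{2}.
\]
Alternatively, (c) follows directly from Theorem~\ref{thm:main}(a), since there are exactly $(p+1)/2$ squares in $\Z/p\Z$ (counting zero). There is no real obstacle here; the substantive work was done in Lemma~\ref{lem:propertiesoff}, and the corollary is a short unpacking of that quadratic congruence using only standard properties of quadratic residues modulo an odd prime.
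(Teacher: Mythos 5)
Your proposal is correct and follows essentially the same route as the paper: both reduce everything to the congruence $\overline{f}(s,n)\equiv_p -n^2q$ from Lemma~\ref{lem:propertiesoff}, observe that for $a\neq 0$ the equation $n^2\equiv_p -aq^\prime$ has exactly two solutions over the field $\Z/p\Z$, and obtain (c) by counting fibers so that $1+\tfrac{p-1}{2}=\tfrac{p+1}{2}$. The only cosmetic difference is that the paper phrases the two-roots fact via unique factorization in $(\Z/p\Z)[x]$ while you phrase it via square roots in a field of odd characteristic.
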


\begin{proof}[Proof of Theorem~\ref{thm:main}] 
Since $f(s,n)\equiv_p-n^2q$,
\[
\overline{S}(L(p,q))=\{a\in\Z/p\Z\,|\,\mbox{$a$ satisfies $a\equiv_p -n^2q$ for some $n$}\}.
\]

If $a=0$, then $n=0$. 

Let $\left(\tfrac{m}{p}\right)$ denote the Legendre symbol of $m$ and $p$, defined by
\[
\left(\frac{m}{p}\right):=\begin{cases}
1&\mbox{if $m$ is a quadratic residue modulo $p$},\\
-1&\mbox{if $m$ is a quadratic non-residue modulo $p$},\\
0&\mbox{if $m$ is zero modulo $p$}.
\end{cases}
\]
Assume $a\neq 0$. Then the condition $a\equiv_p-n^2q$ can be written as $-aq^\prime\equiv_p n^2$, or 
\[
\left(\frac{-aq^\prime}{p}\right)=1.
\]
Since the Legendre symbol is multiplicative in the first argument, we can write the condition as
\[
\left(\frac{-a}{p}\right)\left(\frac{q^\prime}{p}\right)=1,
\]
and, multiplying both sides by $\left(\tfrac{q}{p}\right)$, we get
\[
\left(\frac{-a}{p}\right)=\left(\frac{q}{p}\right),
\]
where we have used that $\left(\tfrac{q^\prime}{p}\right)\left(\tfrac{q}{p}\right)=\left(\tfrac{qq^\prime}{p}\right)=\left(\tfrac{1}{p}\right)=1$.
We can thus write $\overline{S}(L(p,q))$ as
\[
\overline{S}(L(p,q))=\left\{a\in\Z/p\Z\,\Big|\,\mbox{$a=0$ or $\left(\tfrac{-a}{p}\right)=\left(\tfrac{q}{p}\right)$}\right\}.
\]
\end{proof}

\begin{proof}[Proof of Corollary~\ref{cor:main}] If $p$ is prime, $(\Z/p\Z)[x]$ is a unique factorization domain. If $p \neq 2$, this means every equation $n^2\equiv_p-aq^\prime$ with $a\neq 0$ has exactly two solutions. Part (c) follows because the total number of d-invariants, counted with multiplicities, is equal to $p$.
\end{proof}

Note that $\overline{S}(L(2,1))=\Z/2\Z$, so (b) and (c) are false for $p=2$.

\bibliographystyle{amsalpha}
\bibliography{/Users/mdoig/Documents/math/biblio-MASTER}
\end{document}